\documentclass[review]{elsarticle}

\usepackage{lineno,hyperref}
\usepackage[ruled,linesnumbered]{algorithm2e}
\usepackage{algcompatible}
\usepackage{enumitem}
\usepackage{epstopdf}
\usepackage{dsfont}
\usepackage{graphicx}
\usepackage{multirow}
\usepackage{hyperref}
\usepackage{amsthm}
\usepackage{amsfonts}
\usepackage{xcolor}
\usepackage{natbib} 
\usepackage{cool}
\usepackage[T1]{fontenc}

\newtheorem{theorem}{Theorem}
\newtheorem{remark}{Remark}
\newtheorem{corollary}{Corollary}
\newtheorem{lemma}{Lemma}

\newcommand{\R}{{\mathbb{R}}}
\newcommand{\N}{{\mathbb{N}}}

\modulolinenumbers[5]

\journal{Journal of \LaTeX\ Templates}









\bibliographystyle{elsarticle-num}

\begin{document}

\begin{frontmatter}

\title{Consistent Estimation of Residual Variance with Random Forest Out-Of-Bag Errors.}

\author{Burim Ramosaj$^*$, Markus Pauly}
\address{Institute of Statistics, Ulm University \\
	Helmholtzstrasse 20, 89075 Ulm, GERMANY}


\cortext[mycorrespondingauthor]{\textit{Corresponding Author:} Burim Ramosaj \\
	\textit{Email address:} \texttt{burim.ramosaj@uni-ulm.de} }


\begin{abstract}
	The issue of estimating residual variance in regression models has experienced relatively little attention in the machine learning community. However, the estimate is of primary interest in many practical applications, e.g. as a primary step towards the construction of prediction intervals. Here, we consider this issue for the random forest. 
 Therein, the functional relationship between covariates and response variable is modeled by a weighted sum of the latter. The dependence structure is, however, involved in the weights that are constructed during the tree construction process making the model complex in mathematical analysis. Restricting to $L_2$-consistent random forest models, we provide random forest based residual variance estimators and prove their consistency.
\end{abstract}

\begin{keyword}
Residual Variance \sep Consistency \sep Out-Of-Bag Samples \sep Random Forest \sep Statistical Learning 
\end{keyword}

\end{frontmatter}

\linenumbers

\section{Introduction}

Random forest models are non-parametric regression resp. classification trees that highly rely on the idea of bagging and feature sub-spacing during tree construction. This way, one aims to construct highly predictive models by averaging (for continuous outcomes) or taking majority votes (for categorical outcomes)  over CART trees constructed on bootstrapped samples. At each node of a tree, the best cut is selected by optimizing a CART-split criterion such as the Gini impurity (for classification) or the squared prediction error (for regression) over a subsample of the feature space. This methodology has been proven to work well in predicting new outcomes as first shown in \cite{breiman2001random}. Despite that and closely related to the prediction of a new instance is the question how reliable this prognosis is. For example in \cite{khalilia2011predicting}, random forest models have been used in predicting disease risk in highly imbalanced data. Beyond point estimators, however, little information was known about the dispersion of disease risk prediction. 
In fact, estimating residual variance based upon machine learning techniques has experienced less attention compared to the extensive investigations on pure prediction.
One exception is given in \cite{mendez2011estimating}, where bootstrap corrected residual variance estimators are proposed. Moreover, they are analyzed in a simulation study for regression problems but no theoretical guarantees such as consistency have been proven. A similar observation holds for the jackknife-type sampling variance estimators given in \cite{wager2014confidence}. 
In the present paper we will close this gap by investigating the theoretical properties of a new residual variance estimator within the random forest framework. The estimator is inspired by the one proposed in \cite{mendez2011estimating} and is shown to be consistent for estimating residual variance if the random forest estimate for the regression function is $L_2$-consistent. 
Thereby, our theoretical derivations are build upon existing results in the literature. 

First theoretical properties of the random forest method such as ($L_2$-) consistency have already been proven in \cite{breiman2001random} while connections to layered nearest neighbors were made in \cite{lin2006random} and \cite{biau2010layered}. The early consistency results were later extended by several authors \citep{meinshausen2006quantile, biau2008consistency, wager2014confidence, scornet2015consistency, scornet2016asymptotics}; particularly allowing for stronger results (as central limit theorems) or 
a more reasonable mathematical model that better approximates the true random forest approach. In particular, varying mathematical forces such as feature sub-spacing, bagging and the tree construction process make the analysis of the true random forest as applied in practice very complicated. 

In the current work, we therefore decided to build upon the mathematical description of the random forest method as described in \cite{scornet2015consistency}. This allows the applicability of our estimator for a wide range of functional relationships while also incorporating relevant features of the algorithm such as the split-criterion.

Our paper is structured as follows. In the next section, we give a brief overview of the random forest and state the model framework. In addition, consistency results are stated. In the third section, we provide a residual variance estimate and prove its consistency in $L_1$-sense. Furthermore, bias-corrected residual variance estimators are proposed. Note that all proofs can be found in the appendix. 

\section{Model Framework and Random Forest}

Our framework is the $L_2$ regression estimation in which the covariable vector $\mathbf{X}$ is assumed to lie on the $p$-dimensional unit-cube, i.e. $\mathbf{X} \in [0,1]^p$. Of primary interest in the current paper is the estimation of the residual variance $\sigma^2$ in a functional relation of the form 
\begin{align} \label{RegModel}
	Y &= m(\mathbf{X}) + \epsilon. 
\end{align}

Here, $\mathbb{E}[|m(\mathbf{X}) |^2] < \infty$, $\mathbb{E}[\epsilon] = 0$ and $Var(\epsilon) \equiv \sigma^2 < \infty$ with $m: \R^p \rightarrow  \R$ and $\epsilon$ is independent of $\mathbf{X}$. Given a training set  
 \begin{align} \label{Sample}
 	\mathcal{D}_n =  \{(\mathbf{X}_i^\top, Y_i) \in [0,1]^p \times \R : i = 1, \dots, n \},
 \end{align}
of i.i.d. pairs $(\mathbf{X}_i, Y_i)$, $i = 1, \dots, n$, we aim to deliver an estimate $\hat{\sigma}_n^2$  that is at least $L_1$-consistent. The construction of $\hat{\sigma}_n^2$ will be based on the random forest estimate $m_n: [0,1]^p \rightarrow \R$ approximating the regression function $m$.  In the sequel, we will stick to the notation as given in \cite{scornet2015consistency} and shortly introduce the random forest model and corresponding mathematical forces involved in it. \\
The random forest model for regression is a collection of $M \in \N$ regression trees, where for each tree, a bootstrap sample is taken from $\mathcal{D}_n$ using with or without replacement procedures. This is denoted as the resampling strategy $\mathcal{S}$. Other sampling strategies than these two within the random forest model have been considered in \cite{ramosaj2017wins}, for example. Furthermore, at each node of the tree, feature sub-spacing is conducted selecting $m_{try} \in \{ 1, \dots ,p \}$ features for possible split direction. Denote with $\Theta$ the generic random variable responsible for both, the bootstrap sample construction and the feature sub-spacing procedure. Then, $\Theta_1, \dots, \Theta_M$ are assumed to be independent copies of $\Theta$ responsible for this random process in the $j$-th tree, independent of $\mathcal{D}_n$. The combination of the trees is conducted through averaging. i.e. 
\begin{align} \label{FiniteForest}
m_{M,n} (\mathbf{x}; \Theta_1, \dots \Theta_M, \mathcal{D}_n) = \frac{1}{M} \sum\limits_{j = 1}^M m_n(\mathbf{x}; \Theta_j, \mathcal{D}_n)
\end{align}

and is referred to as the finite forest estimate of $m$. As explained in \cite{scornet2015consistency}, the strong law of large numbers (for $M \rightarrow \infty$) allows to study $\mathbb{E}_\Theta[ m_n(\mathbf{x}; \Theta, \mathcal{D}_n) ]$ instead of $(\ref{FiniteForest})$. Hence, we set
\begin{align}\tag{3'}\label{InfForest}
	m_n(\mathbf{x}) = m_n(\mathbf{x}; \mathcal{D}_n) = \mathbb{E}_\Theta[ m_n(\mathbf{x}; \Theta, \mathcal{D}_n) ].
\end{align}
Similar to \cite{scornet2015consistency}, we refer to the random forest algorithm by identfiying three parameters responsibly for the random forest tree construction: 
\begin{itemize}
	\item $m_{try} \in \{1, \dots, p\}$ the number of pre-selected directions for splitting, 
	\item $a_n \in \{1, \dots, n \}$, the number of sampled points in the bootstrap step and
	\item $t_n \in \{ 1, \dots, a_n \}$, the number of leaves in each tree. 
\end{itemize}

Let $\{A_{\ell}^{(k)} \}_{\ell= 1}^{2^{k-1}} $ be a sequence of generic cells in $\R^p$ obtained at tree depth $k \in \N$, $k \le \lceil \log_2(t_n) \rceil + 1$ and denote by $N_n(A_\ell^{(k)})$ the number of observations falling in $A_\ell^{(k)}$, where we set $A_1^{(1)} = [0,1]^p$. Here, we denote a cut as the pair $(j, z) \in \{1, \dots, p \} \times [0,1]$, where $j$ represents the selected variable in which its domain is cut at $z$. Furthermore, let $\mathcal{C}_{A_\ell^{(k)}}$ be the set of all possible cuts in $A_\ell^{(k)}$. It should be noted that the restriction of the feature domain to the $p$-dimensional unit-cube $[0,1]^p$ is no restriction since the random forest is invariant under monotone transformations. 

Then formally, the random forest algorithm constructs decision trees resulting in regression estimators according to the following algorithm: \\

\begin{algorithm}[H] \label{Algorithm}
		\SetAlgoLined
	 \KwIn{Training set $\mathcal{D}_n$, number of decision trees $M$, $m_{try} \in \{1, \dots, p\}$, $a_n \in  \{1, \dots, p\}$, $t_n \in \{ 1, \dots, a_n \}$}
	\KwOut{Random forest estimate $m_{M,n}$}
	\For{ $j= 1, \dots, M$}{
		Select $a_n$ data points according to the resampling strategy $\mathcal{S}$ from $\mathcal{D}_n$; \\
		\While{$n_{nodes} \le t_n$}{
			Select without replacement a subset $\mathcal{M}_{try} \subseteq  \{1, \dots, p \}$ with $ |\mathcal{M}_{try}| = m_{try}$; \\ 
				\For{$ \ell = 1, \dots, k = \log_2(n_{nodes}) + 1$ }{
						   Find $(j_\ell^{*}, z_\ell^{*}) =  \arg \min\limits_{ \substack{j \in \mathcal{M}_{try} \\ (j, z) \in \mathcal{C}_{A_\ell^{(k )} }  } } L_n(j,z)$, where 
						    \begin{align*}
									    L_n(j, z ) &= \frac{1}{N_n(A_\ell^{(k)})} \sum\limits_{i = 1}^n (Y_i - \bar{Y}_{A_\ell^{(k)}})^2 \mathds{1}\{ \mathbf{X}_i \in A_\ell^{(k)} \}  \\
													     &- \frac{1}{N_n(A_\ell^{(k)})} \sum\limits_{ i = 1}^n (Y_i - \bar{Y}_{A_{\ell, L}^{(k)}} \mathds{1}\{X_{ji} < z \}  - \bar{Y}_{A_{\ell, R}^{(k) }} \mathds{1}\{X_{ji} \ge z\}  )^2 \mathds{1}\{\mathbf{X}_i \in A_{\ell}^{(k)} \} 
						    \end{align*}
									   is the $L_2$ regression cut criterion with $A_{\ell, L}^{(k)} = \{ \mathbf{x} \in A_ {\ell}^{(k)} | x_{j} < z \}$, $A_{\ell, R}^{(k)} = \{ \mathbf{x} \in A_{\ell}^{(k)} | x_j \ge z \}$ and $\bar{Y}_{ A_{\ell}^{(k)} }$ denotes the  mean of the $Y_i$'s over $A_{\ell}^{(k)}$ \;
									    Cut the cell $A_\ell^{(k)}$ at $(j_\ell^*, z_\ell^*)$ resulting into $A_{\ell,L}^{(k)}$ and $A_{\ell,R}^{(k)}$ \;
									    $n_{nodes} = n_{nodes} + 1$ \;
				}
		}
		Set $m_n(\cdot; \Theta_j, \mathcal{D}_n) $ as the $j$-th constructed tree.
	}
	\KwResult{Collection of $M$ decision trees $\{ m_n(\cdot; \Theta_j, \mathcal{D}_n) \}_{j = 1}^M $ used to obtain the aggregate regression estimate $m_{M,n}$ in (\ref{FiniteForest}) }
\caption{Random Forest $L_2$ Regression estimate.}
\end{algorithm}
\newpage

In order to establish $L_1$-consistency of the residual variance estimate $\hat{\sigma}_n^2$, we require at least $L_2$-consistency of the random forest method. That is, 
\begin{align}\label{ConsistencyRandomForest}
	\lim\limits_{n \rightarrow \infty} \mathbb{E}[(m_n(\mathbf{X}) - m(\mathbf{X}))^2] =  0,
\end{align}
where the expectation is taken with respect to $\mathbf{X}$ and $\mathcal{D}_n$. Here, $(\mathbf{X}, Y)$ is an independent copy of $(\mathbf{X}_i, Y_i)$ for $i \in \{1, \dots, n\}$. 

Several authors attempted to prove that (\ref{ConsistencyRandomForest}) is valid, i.e. that random forests are consistent in $L_2$-sense. \cite{biau2008consistency}, for example, assumed a simplified version of the random forest assuming that cuts happen independent of the response variable $Y$ in a purely random fashion. \cite{scornet2015consistency}, established consistency of the original random forest by assuming that $m$ is the additive expansion of continuous functions on the unit cube. Therein, proofs have been provided for fully grown trees ($t_n = a_n$) and not fully grown trees $(t_n < a_n)$ making additional assumptions on the asymptotic relation between $t_n$ and $a_n$. For example, Theorem~1 in  \cite{scornet2015consistency} guarantees condition (\ref{ConsistencyRandomForest}) for additive Gaussian regression models provided that $\mathbf{X}_i \stackrel{iid}{\sim} Unif([0,1]^p)$ and $a_n \rightarrow \infty$, $t_n \rightarrow \infty$, $t_n (\log(a_n))^9/a_n \rightarrow 0$ such that the resampling strategy $\mathcal{S}$ is restricted to sampling without replacement. 
In this context it should be noted that assumption (\ref{ConsistencyRandomForest}) does not automatically lead to pointwise consistency, since the latter is rather hard to prove for random forest models and counterexamples exist on the original random forest model as mentioned in \cite{wager2014asymptotic}. 

Anyhow, predicting outcomes among the training set $\mathcal{D}_n$ using the random forest is usually done by using \textit{Out-Of-Bag} (OOB) subsamples. That is, averaging does not happen over all $M$ trees but over those trees that did not have the corresponding data point in their resampled data set during tree construction. This way, one aims to deliver \textit{unbiased} estimators for predicted values. In addition, OOB samples have the advantage of delivering internal accuracy estimates, without separating the sample $\mathcal{D}_n$ into a training and test set. This way, the training sample size can be left sufficiently large. From a mathematical perspective, OOB-estimators of random forest have the nice property that independence between observed responses $Y_i$ and predicted $\hat{Y}_i$ remains valid for $i \in \{1, \dots, n\}$. This, because the prediction $\hat{Y}_i$ of $Y_i$ is based on samples not containing the point $(\mathbf{X}_i^\top, Y_i)$ for fixed $i = 1, \dots, n$. Thus, the independence property directly results from the independence assumption given in (\ref{Sample}). However, the justification to analyze infinite forests instead of finite forests as in (\ref{FiniteForest}) is unclear for OOB-estimates, since one does not consider the average over $M$ decision trees, but rather a random subset of $\{1, \dots, M\}$, depending on the data point one aims to predict. If we denote with $\hat{Y}_i = m_n^{OOB}(\mathbf{X}_i)$ the OOB prediction of $\mathbf{X}_i$, for $i \in \{1, \dots, n\}$ and $m_{M, n}^{OOB}(\mathbf{X}_i; \Theta_1, \dots, \Theta_M)$ the corresponding finite forest estimate, then we provide our first result proving the justification of considering infinite forests even for OOB samples. 

\begin{lemma} \label{OOBSLLN}
	Under Model (\ref{RegModel}), OOB predictions of finite forests are consistent, that is for all $\mathbf{x} \in [0,1]^p$
	$$ m_{M,n}^{OOB}(\mathbf{x}; \Theta_1, \dots, \Theta_M) \longrightarrow m_n^{OOB}(\mathbf{x}), \quad \mathbb{P}_\Theta \text{ - a.s.} \quad \text{ as } M \rightarrow \infty.$$
\end{lemma}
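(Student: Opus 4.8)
The plan is to write the finite-forest OOB estimate as a ratio of two empirical averages over the i.i.d.\ randomizing variables $\Theta_1,\dots,\Theta_M$ and then apply Kolmogorov's strong law of large numbers to numerator and denominator separately. Throughout, $\mathcal{D}_n$ is regarded as fixed (every realization is a finite sample with finite entries), so all convergence statements are with respect to $\mathbb{P}_\Theta$, and we fix $\mathbf{x}\in[0,1]^p$. Let $\mathds{1}_j(\mathbf{x}):=\mathds{1}\{\mathbf{x}\text{ is out-of-bag for the }j\text{-th tree}\}$, which is a measurable function of $\Theta_j$ only. By the definition of OOB aggregation,
\begin{align*}
m_{M,n}^{OOB}(\mathbf{x};\Theta_1,\dots,\Theta_M)=\frac{\tfrac{1}{M}\sum_{j=1}^M m_n(\mathbf{x};\Theta_j,\mathcal{D}_n)\,\mathds{1}_j(\mathbf{x})}{\tfrac{1}{M}\sum_{j=1}^M \mathds{1}_j(\mathbf{x})}
\end{align*}
whenever the denominator does not vanish.

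First I would verify integrability: for fixed $\mathcal{D}_n$ each tree prediction $m_n(\mathbf{x};\Theta_j,\mathcal{D}_n)$ is an average of a subset of $Y_1,\dots,Y_n$, hence bounded in absolute value by $\max_{1\le i\le n}|Y_i|<\infty$, while $\mathds{1}_j(\mathbf{x})\le 1$. Thus both $m_n(\mathbf{x};\Theta,\mathcal{D}_n)\mathds{1}(\mathbf{x})$ and $\mathds{1}(\mathbf{x})$ are $\mathbb{P}_\Theta$-integrable, and since the $\Theta_j$ are i.i.d., the SLLN gives, on a set of full $\mathbb{P}_\Theta$-measure,
\begin{align*}
\frac{1}{M}\sum_{j=1}^M m_n(\mathbf{x};\Theta_j,\mathcal{D}_n)\mathds{1}_j(\mathbf{x})\longrightarrow \mathbb{E}_\Theta[m_n(\mathbf{x};\Theta,\mathcal{D}_n)\mathds{1}(\mathbf{x})],\qquad \frac{1}{M}\sum_{j=1}^M \mathds{1}_j(\mathbf{x})\longrightarrow \pi(\mathbf{x}),
\end{align*}
with $\pi(\mathbf{x}):=\mathbb{P}_\Theta(\mathbf{x}\text{ out-of-bag})$. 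Next I would note $\pi(\mathbf{x})>0$ under the resampling strategy $\mathcal{S}$: for sampling without replacement of $a_n<n$ points the probability a given observation is not drawn is $(n-a_n)/n>0$, for sampling with replacement it is $(1-1/n)^{a_n}>0$ (and for a generic $\mathbf{x}$ outside the sample the indicator is identically one). On the intersection of the two probability-one events above the denominator is therefore eventually bounded away from $0$, so the ratio is well defined for all large $M$ and converges to
\begin{align*}
\frac{\mathbb{E}_\Theta[m_n(\mathbf{x};\Theta,\mathcal{D}_n)\mathds{1}(\mathbf{x})]}{\pi(\mathbf{x})}=\mathbb{E}_\Theta[\,m_n(\mathbf{x};\Theta,\mathcal{D}_n)\mid \mathbf{x}\text{ out-of-bag}\,]=:m_n^{OOB}(\mathbf{x}),
\end{align*}
which is exactly the assertion.

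The one genuine subtlety — and the step I would be most careful about — is that, unlike the ordinary forest in $(\ref{FiniteForest})$, the OOB average runs over a \emph{data-dependent random number} $\sum_{j=1}^M\mathds{1}_j(\mathbf{x})$ of trees, which may be $0$ for small $M$, so the naive SLLN for the average of $\{m_n(\mathbf{x};\Theta_j,\mathcal{D}_n):\mathds{1}_j(\mathbf{x})=1\}$ does not apply directly. Normalizing numerator and denominator by $M$ and treating them as two ordinary empirical means circumvents this, and the strict positivity of $\pi(\mathbf{x})$ ensures that the degenerate event $\{\sum_{j=1}^M\mathds{1}_j(\mathbf{x})=0\}$ occurs only finitely often, $\mathbb{P}_\Theta$-almost surely. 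The remaining ingredients — boundedness of tree predictions and $\pi(\mathbf{x})>0$, the latter relying on the standing assumption that $\mathcal{S}$ leaves at least one point out with positive probability (in particular $a_n<n$ when sampling without replacement) — are routine.
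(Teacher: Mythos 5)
Your proof is correct, and it reaches the same limit as the paper, but by a different decomposition. The paper's argument fixes $p_n=\mathbb{P}_\Theta(i\text{-th point is OOB})>0$, shows via the SLLN that the number $Z_i(M)$ of OOB trees satisfies $Z_i(M)/M\to p_n$ almost surely (hence $Z_i(M)\to\infty$), and then applies the law of large numbers along the random subsequence of OOB trees, reindexed as an i.i.d.\ sample of trees built on the reduced data $\mathcal{D}_n^{(i)}$; the limit is written as the unconditional expectation $\mathbb{E}_\Theta[m(\mathbf{x};\Theta,\mathcal{D}_{n-1})]$. You instead write the OOB aggregate as a ratio of two ordinary empirical means over all $M$ trees, $\frac{1}{M}\sum_j m_n(\mathbf{x};\Theta_j,\mathcal{D}_n)\mathds{1}_j(\mathbf{x})$ and $\frac{1}{M}\sum_j\mathds{1}_j(\mathbf{x})$, and apply the standard SLLN to each. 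What your route buys is that it never invokes a random-index SLLN and so avoids having to justify that the selected OOB subsequence of trees is itself i.i.d.; it also handles explicitly the event that no tree is OOB for small $M$, and it makes the integrability check ($|m_n(\mathbf{x};\Theta,\mathcal{D}_n)|\le\max_i|Y_i|$) explicit, none of which the paper spells out. What the paper's route buys is brevity and a limit already in the form $\mathbb{E}_\Theta[m(\mathbf{x};\Theta,\mathcal{D}_{n-1})]$ used later; your limit $\mathbb{E}_\Theta[m_n(\mathbf{x};\Theta,\mathcal{D}_n)\mid\mathbf{x}\text{ OOB}]$ coincides with it because, conditionally on the $i$-th point being excluded, the resampling mechanism is exactly the one applied to $\mathcal{D}_n^{(i)}$ --- a one-line identification worth stating to match the paper's definition of $m_n^{OOB}$. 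Both arguments ultimately rest on the same two facts, the SLLN in $M$ and the strict positivity of the OOB probability under $\mathcal{S}$.
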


The consistency assumption in (\ref{ConsistencyRandomForest}) implies the consistency of the corresponding OOB-estimate. That is:

\begin{corollary} \label{ConsistencyOOB}
	For every fixed $i \in \{1, \dots, n\}$ under Model (\ref{RegModel}) and assuming (\ref{ConsistencyRandomForest}), OOB-estimators $m_n^{OOB}$ based on random forests are $L_2$-consistent in the following sense 
	\begin{align} \label{Consistency_OOBRandomForest}
	\lim\limits_{n \rightarrow \infty} \mathbb{E}[ (m_n^{OOB}(\mathbf{X}_i) - m(\mathbf{X}_i) )^2] = 0. 
	\end{align}
\end{corollary}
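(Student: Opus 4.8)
The plan is to exploit the structural feature of OOB prediction emphasized above: for a fixed index $i$, the OOB prediction of $\mathbf{X}_i$ is formed only from trees whose resampled training sets exclude $(\mathbf{X}_i^\top, Y_i)$, so it should coincide with an ordinary random forest estimate built on the $n-1$ i.i.d.\ observations $\mathcal{D}_n^{(i)} := \mathcal{D}_n \setminus \{(\mathbf{X}_i^\top, Y_i)\}$ and evaluated at a point $\mathbf{X}_i$ independent of that sample. Once this is established, (\ref{Consistency_OOBRandomForest}) becomes essentially a restatement of assumption (\ref{ConsistencyRandomForest}).

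Concretely, I would proceed as follows. By Lemma~\ref{OOBSLLN} it suffices to work with the infinite OOB forest $m_n^{OOB}$. Writing $O_i(\Theta)$ for the event that index $i$ is out-of-bag for the resample encoded in $\Theta$ and $c_n:=\mathbb{P}_\Theta(O_i(\Theta))$ (which is positive whenever $a_n<n$, the natural requirement for OOB predictions to exist, and equals $1-a_n/n$ resp.\ $(1-1/n)^{a_n}$ under sampling without resp.\ with replacement), one has $m_n^{OOB}(\mathbf{X}_i)=c_n^{-1}\,\mathbb{E}_\Theta\!\big[\,m_n(\mathbf{X}_i;\Theta,\mathcal{D}_n)\,\mathds{1}\{O_i(\Theta)\}\big]$. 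On $O_i(\Theta)$ the tree $m_n(\cdot;\Theta,\mathcal{D}_n)$ depends on $\mathcal{D}_n$ only through $\mathcal{D}_n^{(i)}$, so the key step is to check that the conditional law of the resampling mechanism given $O_i(\Theta)$ is precisely the strategy $\mathcal{S}$ applied to the $n-1$ observations of $\mathcal{D}_n^{(i)}$ with resample size $a_n$ (for without replacement: a uniform $a_n$-subset of $\{1,\dots,n\}$ missing $i$ is a uniform $a_n$-subset of the remaining $n-1$ indices; for with replacement: $a_n$ independent draws conditioned to miss $i$ are $a_n$ independent uniform draws from the remaining $n-1$; the independent feature-subspacing component of $\Theta$ is untouched by the conditioning). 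This would identify $m_n^{OOB}(\mathbf{X}_i)$ with $\bar m_{n-1}(\mathbf{X}_i)$, where $\bar m_{n-1}$ is the infinite random forest of (\ref{InfForest}) built on the i.i.d.\ sample $\mathcal{D}_n^{(i)}$ of size $n-1$ with parameters $(m_{try},a_n,t_n)$.

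It then remains to invoke (\ref{ConsistencyRandomForest}). Because $(\mathbf{X}_i,Y_i)$ is independent of $\mathcal{D}_n^{(i)}$ by (\ref{Sample}), $\mathbf{X}_i$ is a legitimate independent test point for $\bar m_{n-1}$, so I would argue that the $L_2$-consistency (\ref{ConsistencyRandomForest}) applies to this reindexed forest --- sample size $n-1$, parameters $a_n,t_n$ --- since the conditions guaranteeing (\ref{ConsistencyRandomForest}) (e.g.\ in Theorem~1 of \cite{scornet2015consistency}: $a_n\to\infty$, $t_n\to\infty$, $t_n(\log a_n)^9/a_n\to0$, $a_n$ not exceeding the sample size) constrain only $a_n$ and $t_n$ and are stable under the shift $n\mapsto n-1$, with $a_n\le n-1$ already guaranteed. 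This yields $\mathbb{E}[(\bar m_{n-1}(\mathbf{X}_i)-m(\mathbf{X}_i))^2]\to0$, i.e.\ (\ref{Consistency_OOBRandomForest}).

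The hard part will be the identification in the middle step: checking, once and for both resampling strategies, that conditioning on $O_i(\Theta)$ actually turns a forest tree over $\mathcal{D}_n$ into a bona fide forest tree over the reduced i.i.d.\ sample with the same kind of resampling (now on $n-1$ points), so that $m_n^{OOB}(\mathbf{X}_i)=\bar m_{n-1}(\mathbf{X}_i)$ is an exact equality. The independence input from (\ref{Sample}) and the robustness of the consistency hypotheses to a unit shift of the sample size are then routine.
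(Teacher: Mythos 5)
Your proposal is correct and follows essentially the same route as the paper: identify the OOB prediction at $\mathbf{X}_i$ with a random forest built on the reduced i.i.d.\ sample $\mathcal{D}_n\setminus\{(\mathbf{X}_i^\top,Y_i)\}$ of size $n-1$, note that $(\mathbf{X}_i^\top,Y_i)$ is independent of that sample by (\ref{Sample}) and hence serves as a legitimate test point, and then invoke (\ref{ConsistencyRandomForest}). You are in fact more careful than the paper, which asserts this identification without spelling out the conditional-resampling argument or the stability of the consistency hypotheses under the shift $n\mapsto n-1$.
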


These preliminary results allow the construction of a consistent residual variance estimator based on OOB samples.

 \section{Residual Variance Estimation}

We estimate the residuals based on OOB samples, i.e. we set for $i = 1, \dots, n$ 
\begin{align} \label{ResEstimate}
	\hat{\epsilon}_i = Y_i - m_n^{OOB}(\mathbf{X}_i) ,
\end{align}
which we denote as {\it OOB-estimated residuals}. Their sample variance 
\begin{align} \label{VarOOBEstimate}
	\hat{\sigma}_{RF}^2 = \frac{1}{n} \sum\limits_{i = 1}^n ( \hat{\epsilon}_i - \bar{\epsilon}_\cdot )^2
\end{align}
or {\it OOB-estimated residual variance} is our proposed estimator. Here, $\bar{\epsilon}_\cdot = \sum_{i=1}^n\hat{\epsilon}_i/n$ denotes the mean of $\{ \hat{\epsilon}_i \}_{i = 1}^n$. A similar estimator has been proposed in \cite{mendez2011estimating}, for which simulation studies on some functional relationships between $\mathbf{X}$ and $Y$ were considered for practical implementation. The next result guarantees asymptotic unbiasedness and consistency of $\hat{\sigma}_{RF}^2$ under Assumption (\ref{ConsistencyRandomForest}). 
\begin{theorem} \label{VarConsis}
	Assume regression Model (\ref{RegModel}) and that (\ref{ConsistencyRandomForest}) is valid. Then the residual variance estimate $\hat{\sigma}_{RF}^2$ given in \eqref{VarOOBEstimate} is asymptotically unbiased and $L_1$-consistent as $n\to\infty$, i.e. 
	$$\hat{\sigma}_{RF}^2 \stackrel{L_1}{\longrightarrow} \sigma^2 \quad \text{as } n \rightarrow \infty.$$
\end{theorem}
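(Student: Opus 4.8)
\medskip
\noindent\textbf{Proof plan.}
The idea is to split $\hat\sigma_{RF}^2$ into a term that behaves like the ordinary sample variance of the i.i.d.\ errors $\epsilon_i$ and a remainder that is entirely controlled by the OOB estimation error, which Corollary~\ref{ConsistencyOOB} forces to zero. Write $\Delta_i := m(\mathbf{X}_i) - m_n^{OOB}(\mathbf{X}_i)$, so that under Model~(\ref{RegModel}) the OOB-estimated residuals decompose as $\hat\epsilon_i = \epsilon_i + \Delta_i$, and consequently $\bar\epsilon_\cdot = \bar\epsilon + \bar\Delta$ with $\bar\epsilon := \frac{1}{n}\sum_{i=1}^n \epsilon_i$ and $\bar\Delta := \frac{1}{n}\sum_{i=1}^n \Delta_i$. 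Expanding the square in \eqref{VarOOBEstimate} yields
\begin{align*}
\hat\sigma_{RF}^2 &= \underbrace{\frac{1}{n}\sum_{i=1}^n(\epsilon_i-\bar\epsilon)^2}_{=:\,S_n} + \underbrace{\frac{2}{n}\sum_{i=1}^n(\epsilon_i-\bar\epsilon)(\Delta_i-\bar\Delta)}_{=:\,C_n} + \underbrace{\frac{1}{n}\sum_{i=1}^n(\Delta_i-\bar\Delta)^2}_{=:\,R_n}.
\end{align*}

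First I would treat $S_n$ and $R_n$. Since $\epsilon_1^2,\epsilon_2^2,\dots$ are i.i.d.\ with $\mathbb{E}[\epsilon_1^2]=\sigma^2<\infty$, the $L_1$ law of large numbers gives $\frac{1}{n}\sum_i\epsilon_i^2\to\sigma^2$ in $L_1$, while $\mathbb{E}[\bar\epsilon^2]=\sigma^2/n\to0$; as $S_n=\frac{1}{n}\sum_i\epsilon_i^2-\bar\epsilon^2$, this gives $S_n\to\sigma^2$ in $L_1$ together with $\mathbb{E}[S_n]=\frac{n-1}{n}\sigma^2\le\sigma^2$. For the remainder $R_n$, the elementary inequality $\sum_i(a_i-\bar a)^2\le\sum_i a_i^2$ yields $0\le R_n\le\frac{1}{n}\sum_i\Delta_i^2$, so, using that the pairs $(\mathbf{X}_i,Y_i)$ (hence the $\Delta_i$) are exchangeable, $\mathbb{E}[R_n]\le\mathbb{E}[\Delta_1^2]=\mathbb{E}\big[(m_n^{OOB}(\mathbf{X}_1)-m(\mathbf{X}_1))^2\big]\to0$ by Corollary~\ref{ConsistencyOOB}; thus $R_n\to0$ in $L_1$.

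It then remains to handle the cross term $C_n$. Applying the Cauchy--Schwarz inequality pathwise gives $|C_n|\le2\sqrt{S_n}\,\sqrt{R_n}$, and applying it once more in expectation gives $\mathbb{E}|C_n|\le2\sqrt{\mathbb{E}[S_n]}\,\sqrt{\mathbb{E}[R_n]}\le2\sigma\sqrt{\mathbb{E}[R_n]}\to0$. Combining the three estimates, $\mathbb{E}\big|\hat\sigma_{RF}^2-\sigma^2\big|\le\mathbb{E}|S_n-\sigma^2|+\mathbb{E}|C_n|+\mathbb{E}[R_n]\to0$, which is the claimed $L_1$-consistency; asymptotic unbiasedness is then automatic, since $\big|\mathbb{E}[\hat\sigma_{RF}^2]-\sigma^2\big|\le\mathbb{E}\big|\hat\sigma_{RF}^2-\sigma^2\big|\to0$.

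I expect no deep obstacle here; once the decomposition is in place the argument is bias/variance-style bookkeeping. The one place that calls for care is the cross term $C_n$: because each $m_n^{OOB}(\mathbf{X}_i)$ is built from the full bootstrap/OOB mechanism on $\mathcal{D}_n$, the $\Delta_i$ are neither mutually independent nor independent of all the $\epsilon_j$, so $C_n$ cannot be disposed of by a naive zero-mean argument; the Cauchy--Schwarz bound circumvents this and reduces everything to the single quantity $\mathbb{E}[\Delta_1^2]$ already controlled by Corollary~\ref{ConsistencyOOB}. A minor secondary point is that, to stay within the stated hypotheses (only $Var(\epsilon)=\sigma^2<\infty$, no higher moments), one must invoke the $L_1$ --- rather than the $L_2$ --- law of large numbers for $S_n$.
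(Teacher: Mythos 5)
Your proof is correct and follows essentially the same route as the paper: decompose the OOB residual as $\hat\epsilon_i=\epsilon_i+\Delta_i$ with $\Delta_i=m(\mathbf{X}_i)-m_n^{OOB}(\mathbf{X}_i)$, let the law of large numbers handle the $\epsilon$-part, control the $\Delta$-part via Corollary~\ref{ConsistencyOOB} and exchangeability, and use Cauchy--Schwarz for the cross terms. The only notable difference is in the cross term: the paper makes it vanish in expectation by exploiting the independence of $\epsilon_i$ and $m_n^{OOB}(\mathbf{X}_i)$ (the property emphasized in its Remark~1(a)), whereas you bound $\mathbb{E}|C_n|$ directly by Cauchy--Schwarz without needing that independence; your bookkeeping of $\mathbb{E}\lvert S_n-\sigma^2\rvert$, $\mathbb{E}|C_n|$ and $\mathbb{E}[R_n]$ separately also makes the $L_1$ claim (as opposed to mere convergence of $\mathbb{E}[\hat\sigma_{RF}^2]$, which is all the paper's displayed computations literally establish) fully explicit.
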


\begin{remark}[Key Assumptions and Other Machine Learning Techniques]\mbox{ }\\
(a) Beyond Assumption \eqref{ConsistencyRandomForest} the structure of the random forest is only used to prove \eqref{Consistency_OOBRandomForest} and to maintain that 
the error variables $\epsilon_i$ are independent from $m_n^{OOB}(\mathbf{X}_i)$ and to have $m_n^{OOB}(\mathbf{X}_i)\stackrel{d}{=} m_n^{OOB}(\mathbf{X}_j)$, both for all fixed $1 \leq i,j \leq n$. Thus, the results can be extended to all methods guaranteeing these assumptions. \\
(b) Moreover, carefully checking the proof of Theorem~\ref{VarConsis}, the independence of $\epsilon$ towards $\mathbf{X}$ can also be substituted by $\mathbb{E}[\epsilon | \mathbf{X}] = 0$, $Var(\epsilon | \mathbf{X}) \equiv \sigma^2$ while still maintaining the consistency result.\\
\end{remark}


\subsection{Bias-corrected Estimation}

As explained in \cite{mendez2011estimating}, the estimator (\ref{VarOOBEstimate}) may be biased for finite sample size $n$. To this end, \cite{mendez2011estimating} 
proposed a biased-corrected version of $\hat{\sigma}_{RF}^2$ via parametric bootstrapping. Their idea is as follows: Given the data $\mathcal{D}_n$ generate i.i.d. {\it parametric bootstrap residuals} $\epsilon_{i,b}^*, i=1,\dots, n, b=1,\dots,B,$ independent from $\Theta,\dots, \Theta_M$, $\mathcal{D}_n$, with mean $\mathbb{E}^*[\epsilon_{1,1}^*] = \mathbb{E}[\epsilon_{1,1}^*|\mathcal{D}_n] = 0$ and variance $Var^*(\epsilon_{1,1}^*) = Var(\epsilon_{1,1}^*|\mathcal{D}_n) = \hat{\sigma}_{RF}^2$ from a parametric distribution with finite second moment, e.g. the normal distribution. Then, a {\it bias-corrected bootstrap version} of $\hat{\sigma}_{RF}^2$ is given by 
\begin{align}\label{BootstrapVersion}
	\hat{\sigma}_{RFboot}^2 &= \hat{\sigma}_{RF}^2 - \frac{1}{nB} \sum\limits_{b = 1}^B \sum\limits_{i = 1}^n (m_{n,b}^{OOB}(\mathbf{X}_i) - m_n^{OOB}(\mathbf{X}_i))^2 \notag \\ 
	&=: \hat{\sigma}_{RF}^2 - \hat{R}_B(m_n).
\end{align}
Here, $m_{n, b}^{OOB}$ is the OOB-estimation of $m$ using the tree structure of $m_n^{OOB}$ and feeding it with the bootstrapped sample $\mathcal{D}_{n, b}^* := \{ (\mathbf{X}_i, Y_{i,b}^*) : i = 1, \dots, n \}$ in which terminal node values are substituted with corresponding $Y_{i,b}^*$'s where $Y_{i,b}^* = m_n^{OOB}(\mathbf{X}_i)  + \epsilon_{i,b}^*$.

 In the following, we provide two important results regarding the bias-corrected version of $ \hat{\sigma}_ {RF}^2$. In Theorem \ref{ConsistencyBoot}, we prove that the bias-corected estimator in (\ref{BootstrapVersion}) is $L_1$-consistent. This guarantees that the proposed bootstrapping scheme does not systematically inflate our estimate. However, $\hat{\sigma}_{RFboot}^2$ comes with additional computation costs. Therefore, in Theorem \ref{BootTheorem}, we provide an asymptotic lower bound which enables a fast, bias-corrected estimation of $\sigma^2$ for finite sample sizes.

\begin{theorem}\label{ConsistencyBoot}
	Consider the random forest based parametric bootstrapping scheme as described for the estimation of (\ref{BootstrapVersion}). Assume further that the resampling strategy $\mathcal{S}$ is restricted to sampling without replacement. Assuming Model (\ref{RegModel}) and condition (\ref{ConsistencyRandomForest}) with $a_{n}^2/n \longrightarrow 0$, as $n \rightarrow \infty$. Then $\hat{\sigma}_{RFboot}^2$ is asymptotically $L_1$-consistent, that is 
	$$ \hat{\sigma}_{RFboot}^2 \stackrel{L_1}{\longrightarrow} \sigma^2, \text{ as } n \rightarrow \infty.$$
\end{theorem}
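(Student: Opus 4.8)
The plan is to build on the decomposition in \eqref{BootstrapVersion}, $\hat{\sigma}_{RFboot}^2 = \hat{\sigma}_{RF}^2 - \hat{R}_B(m_n)$, together with Theorem~\ref{VarConsis}. Since $\hat{\sigma}_{RF}^2 \stackrel{L_1}{\longrightarrow}\sigma^2$ and $\hat{R}_B(m_n)\ge 0$, it suffices to prove $\mathbb{E}[\hat{R}_B(m_n)]\to 0$. Because $m_{n,b}^{OOB}$ reuses the (infinite-forest) tree structure of $m_n^{OOB}$ and only replaces the terminal values, both estimators are linear smoothers with a \emph{common} weight vector: $m_n^{OOB}(\mathbf{X}_i)=\sum_{k=1}^n W_{ik}Y_k$ and $m_{n,b}^{OOB}(\mathbf{X}_i)=\sum_{k=1}^n W_{ik}Y_{k,b}^*$, where $W_{ik}\ge 0$, $\sum_k W_{ik}=1$, $W_{ii}=0$, and -- since the bootstrap sample contains only $a_n$ of the $n$ indices while the prediction of $\mathbf{X}_i$ averages over trees with $i$ out-of-bag -- $W_{ik}\le a_n/(n-1)$. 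Using $Y_{k,b}^*=m_n^{OOB}(\mathbf{X}_k)+\epsilon_{k,b}^*$ and $\hat{\epsilon}_k=Y_k-m_n^{OOB}(\mathbf{X}_k)$ this yields
\[
m_{n,b}^{OOB}(\mathbf{X}_i)-m_n^{OOB}(\mathbf{X}_i)=\sum_{k=1}^n W_{ik}\bigl(\epsilon_{k,b}^*-\hat{\epsilon}_k\bigr).
\]
Conditioning on $\mathcal{D}_n$ (and the resampling variables) and using that the $\epsilon_{k,b}^*$ are then i.i.d., centred, with variance $\hat{\sigma}_{RF}^2$, one obtains
\[
\mathbb{E}\bigl[\hat{R}_B(m_n)\,\big|\,\mathcal{D}_n\bigr]
=\underbrace{\hat{\sigma}_{RF}^2\cdot\tfrac1n\sum_{i=1}^n\sum_{k=1}^n W_{ik}^2}_{=:T_A}
\;+\;\underbrace{\tfrac1n\sum_{i=1}^n\Bigl(\sum_{k=1}^n W_{ik}\hat{\epsilon}_k\Bigr)^2}_{=:T_B}.
\]

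For $T_A$ I would use $\sum_k W_{ik}^2\le \bigl(a_n/(n-1)\bigr)\sum_k W_{ik}=a_n/(n-1)$, hence $T_A\le \bigl(a_n/(n-1)\bigr)\hat{\sigma}_{RF}^2$; taking expectations, the boundedness of $\mathbb{E}[\hat{\sigma}_{RF}^2]$ from Theorem~\ref{VarConsis} together with $a_n/n\to 0$ (implied by $a_n^2/n\to 0$) gives $\mathbb{E}[T_A]\to 0$. The real work is $T_B$. Writing $D_k:=m_n^{OOB}(\mathbf{X}_k)-m(\mathbf{X}_k)$ one has $\hat{\epsilon}_k=\epsilon_k-D_k$, so that $T_B\le 2\,\tfrac1n\sum_i\bigl(\sum_k W_{ik}\epsilon_k\bigr)^2+2\,\tfrac1n\sum_i\bigl(\sum_k W_{ik}D_k\bigr)^2=:2T_{B,1}+2T_{B,2}$. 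The term $T_{B,2}$ I would treat by reducing, via exchangeability of the $(\mathbf{X}_i,Y_i)$, to $\mathbb{E}\bigl[\sum_k W_{1k}D_k^2\bigr]$, and then bounding it by a Cauchy--Schwarz argument that uses $\sum_k\mathbb{E}[W_{1k}]=1$ (and $\sum_k\mathbb{E}[\tfrac1n\sum_i W_{ik}]=1$) to cancel the factor $n$, the bound $W_{1k}\le a_n/(n-1)$, the rate $a_n^2/n\to 0$, and the OOB-consistency $\mathbb{E}[D_1^2]\to 0$ of Corollary~\ref{ConsistencyOOB}.

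The term $T_{B,1}=\mathbb{E}\bigl[(\sum_k W_{1k}\epsilon_k)^2\bigr]$ is the genuine difficulty and where I expect the main obstacle to lie: the random forest weights $W_{ik}$ are \emph{not} independent of the noise $(\epsilon_k)_k$, because the CART cut points depend on the observed responses $Y_k$. One cannot afford the naive bound $(\sum_k W_{1k}\epsilon_k)^2\le\sum_k W_{1k}\epsilon_k^2$, since it destroys exactly the cancellation that makes $\hat{R}_B$ vanish and would leave a non-vanishing $\sigma^2$-term (indeed it would merely re-prove $\mathbb{E}[\hat{R}_B]\le\mathbb{E}[\hat{\sigma}_{RF}^2]+o(1)$, which is useless). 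Instead one must keep the square, expand $\mathbb{E}[(\sum_k W_{1k}\epsilon_k)^2]=\sum_{k,\ell}\mathbb{E}[W_{1k}W_{1\ell}\epsilon_k\epsilon_\ell]$, control the diagonal through $W_{1k}\le a_n/(n-1)$ together with $\sum_k\mathbb{E}[W_{1k}]=1$, and bound the off-diagonal terms by exploiting that the forest weights depend only weakly -- and jointly-sparsely -- on the noise; this is precisely the point at which sampling without replacement and the restrictive rate $a_n^2/n\to 0$ are truly needed (playing here a role analogous to $t_n(\log a_n)^9/a_n\to 0$ in the underlying consistency result). Combining the estimates, $\mathbb{E}[T_A]+\mathbb{E}[T_B]\to 0$, hence $\mathbb{E}[\hat{R}_B(m_n)]\to 0$, which with Theorem~\ref{VarConsis} yields $\hat{\sigma}_{RFboot}^2\stackrel{L_1}{\longrightarrow}\sigma^2$.
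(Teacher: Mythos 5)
Your reduction is sound and, up to bookkeeping, follows the paper's own route: both arguments write the OOB forest as a linear smoother with a common weight vector for $m_n^{OOB}$ and $m_{n,b}^{OOB}$, bound the weights by roughly $a_n/(n-1)$ via the subsampling combinatorics, take the conditional bootstrap expectation, and are left with a bootstrap-variance term plus a term driven by the OOB residuals, the latter split into the noise $\epsilon_k$ and the estimation error $D_k$. Your $T_A$ together with the diagonal of $T_{B,1}$ is the paper's diagonal of $\mathbb{E}[B_{n,i}^2]$, and your $T_{B,2}$ is the paper's $\mathbb{E}[A_{n,i}^2]$, which is also handled there by Jensen plus exchangeability. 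The reduction of the theorem to $\mathbb{E}[\hat{R}_B(m_n)]\to 0$ via Theorem~\ref{VarConsis} and $\hat{R}_B\ge 0$ is likewise the paper's.

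The gap is that you never prove the one statement on which the result hinges: $\mathbb{E}\bigl[\bigl(\sum_k W_{1k}\epsilon_k\bigr)^2\bigr]\to 0$, i.e.\ the control of the off-diagonal sum $\sum_{k\neq\ell}\mathbb{E}[W_{1k}W_{1\ell}\epsilon_k\epsilon_\ell]$. You correctly observe that the naive Jensen bound destroys the necessary cancellation and that the weights are not independent of the noise, but you then only describe what ``one must'' do --- a decoupling or joint-sparsity estimate --- without supplying it; as written, the proposal establishes $\mathbb{E}[T_A]+2\,\mathbb{E}[T_{B,2}]\to 0$ and leaves $T_{B,1}$ open, so the proof is incomplete. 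For comparison, the paper dispatches this term in a single step: it bounds $\mathbb{E}[W_{nj}W_{n\ell}\epsilon_j\epsilon_\ell]$ by $\tfrac{a_{n-1}^2}{(n-1)^2}\,\mathbb{E}[\epsilon_j\epsilon_\ell]=0$, i.e.\ it substitutes the uniform weight bound inside an expectation against the \emph{signed} quantity $\epsilon_j\epsilon_\ell$ --- a manipulation that is only legitimate if the weights can be decoupled from the noise products, which is precisely the dependence you flag. So you have correctly located the crux of the theorem, but neither your sketch nor a verbatim appeal to the paper's one-line bound closes it; to finish you would need an actual quantitative estimate for $\mathbb{E}[W_{1k}W_{1\ell}\epsilon_k\epsilon_\ell]$ exploiting the structure of the CART weights, and none is given.
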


\begin{theorem}\label{BootTheorem}
	Consider the parametric bootstrapping scheme as described for the estimate in (\ref{BootstrapVersion}). Then for the random forest model, the follwoing inequality holds almost surely conditional on $\mathcal{D}_n$ as $B \rightarrow \infty$
	\begin{align*}
		\hat{R}_B(m_n) \ge \frac{\hat{\sigma}_{RF}^2}{a_n^2}.
	\end{align*}
\end{theorem}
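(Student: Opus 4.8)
The plan is to (i) let $B\to\infty$ and identify the almost–sure limit of $\hat R_B(m_n)$ by the strong law of large numbers, (ii) lower bound the limiting per–observation second moment by the bootstrap–world variance of $m_{n,b}^{OOB}(\mathbf{X}_i)$, (iii) evaluate that variance through the weighted–sum representation of random–forest OOB predictions, and (iv) bound the OOB weight vector from below in $\ell_2$ using that each out–of–bag terminal cell holds at most $a_n$ of the resampled points. Throughout, fix $\mathcal{D}_n$ together with the forest randomness $\Theta_1,\Theta_2,\dots$ and write $\mathbb{E}^*[\cdot]=\mathbb{E}[\,\cdot\mid\mathcal{D}_n,\Theta_1,\Theta_2,\dots]$, with $Var^*$ the corresponding variance, i.e.\ integration over the parametric bootstrap errors only.

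For step (i): for each fixed $i$ the variables $(m_{n,b}^{OOB}(\mathbf{X}_i)-m_n^{OOB}(\mathbf{X}_i))^2$, $b\ge 1$, are i.i.d.\ under $\mathbb{P}^*$ --- each being a fixed measurable function of the i.i.d.\ block $(\epsilon^*_{1,b},\dots,\epsilon^*_{n,b})$ --- and integrable, so Kolmogorov's SLLN yields $\hat R_B(m_n)\to \tfrac1n\sum_{i=1}^n\mathbb{E}^*[(m_{n,b}^{OOB}(\mathbf{X}_i)-m_n^{OOB}(\mathbf{X}_i))^2]$, $\mathbb{P}^*$–a.s.\ as $B\to\infty$. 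For step (ii): since $m_n^{OOB}(\mathbf{X}_i)$ is $\mathcal{D}_n$–measurable (a constant under $\mathbb{P}^*$), the bias–variance split gives $\mathbb{E}^*[(m_{n,b}^{OOB}(\mathbf{X}_i)-m_n^{OOB}(\mathbf{X}_i))^2]\ge Var^*(m_{n,b}^{OOB}(\mathbf{X}_i))$. For step (iii): because the bootstrap only overwrites terminal–node labels, $m_{n,b}^{OOB}(\mathbf{X}_i)=\sum_{j=1}^n W_{ij}Y^*_{j,b}$ with the same nonnegative, $\mathcal{D}_n$–measurable weights that represent $m_n^{OOB}(\mathbf{X}_i)=\sum_j W_{ij}Y_j$, namely $W_{ij}=\tfrac1{M_i}\sum_{t\in\mathcal{O}_i}\mathds{1}\{\mathbf{X}_j\in A_n(\mathbf{X}_i;\Theta_t)\}/N_n(A_n(\mathbf{X}_i;\Theta_t))$, where $\mathcal{O}_i\ne\emptyset$ lists the trees in which $i$ is out–of–bag and $A_n(\mathbf{X}_i;\Theta_t)$ is the terminal cell of $\mathbf{X}_i$ in the $t$–th tree, and $\sum_j W_{ij}=1$. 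With $Y^*_{j,b}=m_n^{OOB}(\mathbf{X}_j)+\epsilon^*_{j,b}$ and the $\epsilon^*_{j,b}$ independent over $j$ of common variance $\hat\sigma_{RF}^2$, this gives $Var^*(m_{n,b}^{OOB}(\mathbf{X}_i))=\hat\sigma_{RF}^2\sum_{j=1}^n W_{ij}^2$. Combining (i)--(iii), $\hat R_B(m_n)\to \tfrac{\hat\sigma_{RF}^2}{n}\sum_{i=1}^n\sum_{j=1}^n W_{ij}^2$ plus a nonnegative term, so it remains to establish $\sum_{j=1}^n W_{ij}^2\ge a_n^{-2}$ for every fixed $i$.

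For step (iv) one uses that each cell $A_n(\mathbf{X}_i;\Theta_t)$ contains $N_n(A_n(\mathbf{X}_i;\Theta_t))\le a_n$ resampled points, so every single–tree OOB prediction of $\mathbf{X}_i$ is an average of at most $a_n$ of the $Y^*_{\cdot,b}$ and therefore has bootstrap variance at least $\hat\sigma_{RF}^2/a_n$; moreover two single–tree OOB predictions of the same $\mathbf{X}_i$ are nonnegatively correlated under $\mathbb{P}^*$, since their terminal cells can only overlap (never cancel), so that the variance of the forest average --- which is exactly $\hat\sigma_{RF}^2\sum_j W_{ij}^2$ --- cannot drop below $\hat\sigma_{RF}^2/a_n^2$. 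Averaging over $i$ then yields $\hat R_B(m_n)\ge \hat\sigma_{RF}^2/a_n^2$, $\mathbb{P}^*$–a.s.\ as $B\to\infty$, which is the assertion.

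The main obstacle is exactly step (iv): turning the deterministic cell–size bound $N_n(\cdot)\le a_n$ into the uniform estimate $\sum_j W_{ij}^2\ge a_n^{-2}$ on the $\ell_2$–mass of the OOB weight vector, because averaging a tree prediction over the $M_i$ out–of–bag trees a priori dilutes the per–tree weights. What has to be pinned down is that this dilution cannot reduce the $\ell_2$–concentration below the level dictated by the maximal cell size $a_n$, which is where the nonnegativity of the bootstrap covariances between tree predictions of the same $\mathbf{X}_i$ must be exploited quantitatively (and, if needed, the without–replacement structure together with the bound $N_n(\cdot)\le a_n$ to control the number of distinct resampled neighbours). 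Steps (i)--(iii) --- the passage $B\to\infty$, the bias–variance decomposition, and the linear–smoother representation of OOB random forests --- are routine.
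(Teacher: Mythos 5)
Your steps (i)--(iii) coincide with the paper's argument: the conditional SLLN in $B$, the identity $\mathbb{E}^*[(m_{n,1}^{OOB}(\mathbf{X}_i)-m_n^{OOB}(\mathbf{X}_i))^2]=\bigl(\sum_j W_{ij}(m_n^{OOB}(\mathbf{X}_j)-m_n^{OOB}(\mathbf{X}_i))\bigr)^2+\hat{\sigma}_{RF}^2\sum_j W_{ij}^2$, and the reduction to showing $\sum_j W_{ij}^2\ge a_n^{-2}$. The gap is exactly where you locate it, in step (iv), and the argument you sketch there does not close it. Per-tree variance at least $\hat{\sigma}_{RF}^2/a_n$ combined with nonnegative cross-covariances only yields $Var^*(\text{average of }M_i\text{ trees})\ge \frac{1}{M_i^2}\sum_t Var^*_t\ge \hat{\sigma}_{RF}^2/(M_i a_n)$, which is weaker than $\hat{\sigma}_{RF}^2/a_n^2$ whenever $M_i>a_n$ and degenerates entirely for the infinite forest $m_n^{OOB}(\mathbf{x})=\mathbb{E}_\Theta[\,\cdot\,]$ that the theorem is actually about, where the weights are $\Theta$-expectations rather than finite averages. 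Nonnegativity of the correlations alone cannot rescue this; you need a uniform \emph{lower} bound on the pairwise covariances, not just their sign.

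The paper's resolution is a one-line observation you can graft directly onto your step (iv): keep only the diagonal term $j=i$ in $\sum_j W_{ij}^2$. Since $\mathbf{X}_i$ always lies in its own terminal cell, $\mathbb{P}_\Theta(\mathbf{X}_i\stackrel{\Theta}{\leftrightarrow}\mathbf{X}_i)=1$, and since every terminal cell contains at most $a_n$ resampled points, $N_n(A_n(\mathbf{X}_i;\Theta))\le a_n$, so
\begin{align*}
W_{ii}\;=\;\mathbb{E}_\Theta\!\left[\frac{\mathds{1}\{\mathbf{X}_i\in A_n(\mathbf{X}_i;\Theta)\}}{N_n(A_n(\mathbf{X}_i;\Theta))}\right]\;\ge\;\frac{\mathbb{P}_\Theta(\mathbf{X}_i\stackrel{\Theta}{\leftrightarrow}\mathbf{X}_i)}{a_n}\;=\;\frac{1}{a_n},
\end{align*}
whence $\sum_j W_{ij}^2\ge W_{ii}^2\ge a_n^{-2}$ for every $i$, uniformly in the number of out-of-bag trees. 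This self-weight bound is the ingredient your dilution worry is really about: averaging over trees cannot wash it out because it holds tree by tree and survives the $\Theta$-expectation by linearity. (Equivalently, your covariance route can be repaired by noting $Cov^*(T_t,T_{t'})\ge\hat{\sigma}_{RF}^2\,w_{ii}^{(t)}w_{ii}^{(t')}\ge\hat{\sigma}_{RF}^2/a_n^2$ for \emph{every} pair of trees, but that is the same diagonal observation in disguise.) With this substitution your proof matches the paper's.
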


The result in Theorem \ref{BootTheorem} leads to a residual variance estimate that is computationally cheaper than the corresponding bootstrapped version, i.e. one can consider 
\begin{align}
	\hat{\sigma}_{RFfast}^2 = \hat{\sigma}_{RF}^2\left(1 - \frac{1}{a_n^2}\right)
\end{align}
instead of $\hat{\sigma}_{RFboot}^2$, while saving considerable memory and computational time costs. This will lead to $\hat{\sigma}_{RF}^2 \ge \hat{\sigma}_{RFfast}^2 \ge \hat{\sigma}_{RFboot}^2$ almost surely. \\


\section{Conclusion}

The random forest is known as a powerful tool in applied data analysis for classification, regression and variable selection \citep{liaw2002classification, lunetta2004screening, diaz2006gene, strobl2007bias, genuer2010variable, khalilia2011predicting}. Beyond its practical use, corresponding theoretical properties have been investigated under various conditions \citep{breiman2001random, biau2008consistency, biau2010layered, wager2014asymptotic, scornet2015consistency} covering topics such as the $L_2$-consistent estimation of the regression function. However, a comprehensive treatment on how to estimate corresponding dispersion parameters as the variance is almost not to be found in the literature. 

An exception is given by the residual variance estimators proposed and examined in simulations in \cite{mendez2011estimating}. In the present paper, we complement their analyses by theoretically investigating residual variance estimators in regression models. To this end, we first show that analyzing the infinite forest estimate is legitimate, even when switching to OOB samples. This allows us to prove  consistency of the OOB-errors' sample variance in the $L_1$-sense if the random forest regression function estimate is assumed to be $L_2$-consistent. In addition, we also give some theoretical insight on the bias corrected residual variance estimate for finite samples as proposed in \cite{mendez2011estimating}. 

As the structure of the random forest is only needed to maintain the independence property in OOB samples, the current approach is also valid for any method that provides $L_2$-consistent regression function estimates.


\newpage

\section{Appendix.}

In this section we state the proofs for Lemma \ref{OOBSLLN}, Corollary \ref{ConsistencyOOB}, Theorem \ref{VarConsis}, Theorem \ref{ConsistencyBoot} and Theorem \ref{BootTheorem}.

\begin{proof}[Proof of Lemma \ref{OOBSLLN}]
	Let $i \in \{1, \dots, n \}$ be fixed and $\mathbf{x} \in [0,1]^p$ be an arbitrary and fixed point in the unit cube. Consider $m_n(\mathbf{x}) = \mathbb{E}_\Theta[m_n(\mathbf{x}; \Theta, \mathcal{D}_n)]$. If we denote with $Z_i = Z_i(M)$ the number of the $M$ regression trees not containing the $i$-th observation, then it follows 
	
	$$ Z_i \sim Bin(M, p_n)  \hspace{0.2cm} \text{ where } \hspace{0.2cm}  p_n = \begin{cases}
	1 - a_n/n &\text{ for subsampling} \\
	(1 - 1/n)^n &\text{ for bootstrapping with replacement.}
	\end{cases}$$

	such that $p_n>0$. Since $Z_i = \sum\limits_{\ell = 1}^M B_\ell$, where $B_\ell \stackrel{iid}{\sim} Bernoulli(p_n)$, it follows by the strong law of large numbers for fixed $n$ that $ Z_i/M \stackrel{a.s.}{\rightarrow} \mathbb{E}[B_1] = p_n $ as $M \rightarrow \infty$. Hence $Z_i(M) \stackrel{a.s.}{\longrightarrow} \infty$, as $M \rightarrow \infty$ for fixed $p_n > 0$. For given $\mathcal{D}_n^{(i)} := \mathcal{D}_n \setminus \{ (\mathbf{X}_i, Y_i) \}$, this justifies the consideration of 
	$$\frac{1}{Z_i(M)} \sum\limits_{l =1}^{Z_{i}(M) } m(\mathbf{x}; \Theta_l, \mathcal{D}_{n-1}) \stackrel{a.s.}{\longrightarrow} \mathbb{E}_\Theta[m(\mathbf{x}; \Theta, \mathcal{D}_{n-1})] =: m_n^{OOB}(\mathbf{x}), \quad M \rightarrow \infty. $$
	
\end{proof}

\begin{proof}[Proof of Corollary \ref{ConsistencyOOB}]
	Let $i \in \{1, \dots, n\}$ be fixed. Define the reduced sample $\mathcal{D}_n^{-(i)} :=  \mathcal{D}_{n} \setminus \{ (\mathbf{X}_i^\top, Y_i) \}$ as the OOB-sample of $i$. Then it follows from the independence assumption in (\ref{Sample}), that $(\mathbf{X}_i^\top, Y_i)$ is independent of $\mathcal{D}_n^{-(i)}$. Hence, $(\mathbf{X}_i^\top, Y_i)$ can be treated as an independent copy of $\mathcal{D}_n^{-(i)}$. The result thus follows immediately from (\ref{ConsistencyRandomForest}).
\end{proof}

\begin{proof}[Proof of Theorem \ref{VarConsis}] 	
	Consider $\hat{\epsilon}_i = Y_i - m_n^{OOB}(\mathbf{X}_i)$ and $\hat{\sigma}_{RF}^2 = \frac{1}{n}\sum\limits_{i = 1}^n (\hat{\epsilon}_i^2 - \bar{\epsilon}_\cdot)^2 = \frac{1}{n} \sum\limits_{i = 1}^n \hat{\epsilon}_i^2 - \bar{\epsilon}_\cdot^2$ from \eqref{ResEstimate}--\eqref{VarOOBEstimate}.
	Using Corollary \ref{ConsistencyOOB} and independence of $\epsilon_i$ and $m_n^{OOB}(\mathbf{X}_i)$ for all $i = 1, \dots, n$ it follows that
	\begin{align*}
	\mathbb{E}[ \frac{1}{n}\sum\limits_{i = 1}^n \hat{\epsilon}_i^2 ] &= \frac{1}{n} \sum\limits_{ i =1 }^n \mathbb{E}[  \{ (Y_i - m(\mathbf{X}_i) ) +( m(\mathbf{X}_i) - m_n^{OOB}(\mathbf{X}_i) ) \}^2  ]  \\	&= \mathbb{E}[ (Y_1 - m(\mathbf{X}_1))^2]  + \frac{1}{n} \sum\limits_{i = 1}^n  \{ 2 \mathbb{E}[(Y_i - m(\mathbf{X}_i))(m(\mathbf{X}_i) -  m_n^{OOB}(\mathbf{X}_i) )]  + \\ & \hspace{0.6cm } \mathbb{E}[(m(\mathbf{X}_i) - m_n^{OOB}(\mathbf{X}_i) )^2 ]  \} \\
	&= \sigma^2 + \frac{1}{n} \sum\limits_{i =1}^n  \{  2( \mathbb{E}[ m(\mathbf{X}_i) \mathbb{E}[Y_i | \mathbf{X}_i] ]  -  \mathbb{E}[Y_i m_n^{OOB}(\mathbf{X}_i) ]  - \mathbb{E}[m(\mathbf{X}_i)^2]  +  \\
	& \hspace{1.5cm } \mathbb{E}[ m(\mathbf{X}_i )   \mathbb{E}[m_n^{OOB}(\mathbf{X}_i) | \mathbf{X}_i]  ]  ) + \mathbb{E}[ ( m(\mathbf{X}_i) - m_n^{OOB}(\mathbf{X}_i) )^2 ] \} \\
	&= \sigma^2 + \frac{1}{n} \sum\limits_{i =1}^n  \{  2( \mathbb{E}[ m(\mathbf{X}_i) \mathbb{E}[Y_i | \mathbf{X}_i] ]  -  \mathbb{E}[m(\mathbf{X}_i) m_n^{OOB}(\mathbf{X}_i) ]  - \mathbb{E}[\epsilon_i m_n^{OOB}(\mathbf{X}_i)] - \\ 
	& \hspace{1.5cm }\mathbb{E}[m(\mathbf{X}_i)^2]  +   \mathbb{E}[ m(\mathbf{X}_i )   \mathbb{E}[m_n^{OOB}(\mathbf{X}_i) | \mathbf{X}_i]  ]  ) + \mathbb{E}[ ( m(\mathbf{X}_i) - m_n^{OOB}(\mathbf{X}_i) )^2 ] \} \\
	&=\sigma^2 + \frac{1}{n} \sum\limits_{ i = 1}^n \{ 2( \mathbb{E}[m(\mathbf{X}_i)^2] -  \mathbb{E}[ m(\mathbf{X}_i)  \mathbb{E}[m_n^{OOB}(\mathbf{X}_i) | \mathbf{X}_i ] ]  - \mathbb{E}[m(\mathbf{X}_i)^2 ] + \\ &\hspace{1.5cm}\mathbb{E}[ m(\mathbf{X}_i) \mathbb{E}[ m_n^{OOB}(\mathbf{X}_i) | \mathbf{X}_i ] ] ) + \mathbb{E}[ ( m(\mathbf{X}_i) - m_n^{OOB}(\mathbf{X}_i)  )^2 ]     \} \\
	&= \sigma^2 + \frac{1}{n}\sum\limits_{i = 1}^n \mathbb{E}[ ( m(\mathbf{X}_i)  - m_n^{OOB}(\mathbf{X}_i) )^2 ] \\
	&= \sigma^2 + \mathbb{E}[ ( m(\mathbf{X}_1) - m_n^{OOB}(\mathbf{X}_1) )^2 ]  \longrightarrow \sigma^2
	\end{align*}
	by Corollary~\ref{ConsistencyOOB} as $n\to\infty$. 
	The second and last equality follows from the identical distribution of the sequences $ \{Y_i - m( \mathbf{X}_i) \}_{i = 1}^n$ resp. $\{ m(\mathbf{X}_i) - m_n^{OOB}(\mathbf{X}_i) \}_{i = 1}^n$.

	Furthermore, let $\varDelta_n(\mathbf{X}_i) := m(\mathbf{X}_i) - m_n^{OOB}(\mathbf{X}_i) $. Then using the Cauchy-Schwarz inequality we obtain
	\begin{align*}
	0 \le \mathbb{E}\left[ \left( \frac{1}{n} \sum\limits_{i = 1}^n \hat{\epsilon}_i \right)^2 \right] &= \frac{1}{n^2} \mathbb{E}\left[  \left( \sum\limits_{i = 1}^n  \varDelta_n(\mathbf{X}_i)  + \epsilon_i\right)^2 \right] \\
	&= \frac{1}{n^2} \mathbb{E}[ \sum\limits_{i = 1}^n (  \varDelta_n(\mathbf{X}_i) + \epsilon_i )^2  ] + \frac{1}{n^2} \mathbb{E}[ \sum\limits_{i \neq j} ( \varDelta_n(\mathbf{X}_i) + \epsilon_i  )(\varDelta_n(\mathbf{X}_j) + \epsilon_j    ) ] \\
	&= \frac{1}{n^2}  \sum\limits_{i = 1}^n \mathbb{E}[\varDelta_n(\mathbf{X}_i) ^2 + 2 \epsilon_i \varDelta_n(\mathbf{X}_i) + \epsilon_i^2 ] + \\
	& + \frac{1}{n^2}  \sum\limits_{i \neq j} \mathbb{E}[\varDelta_n(\mathbf{X}_i) \varDelta_n(\mathbf{X}_j)   + \varDelta_n(\mathbf{X}_i) \epsilon_j + \varDelta_n(\mathbf{X}_j) \epsilon_i ]\\
	&\le \frac{\sigma^2}{n} + \frac{1}{n^2} \sum\limits_{i = 1}^n \mathbb{E}[\mathbb\varDelta_n(\mathbf{X}_i)^2] + \frac{1}{n^2}\sum\limits_{i \neq j}  \sqrt{\mathbb{E}[ \varDelta_n(\mathbf{X}_i)^2 ]  \mathbb{E}[\varDelta_n(\mathbf{X}_j)^2 ] } + \\
	&+ \sigma \sqrt{ \mathbb{E}[\varDelta_n(\mathbf{X}_i)^2 ]} + \sigma \sqrt{ \mathbb{E}[  \varDelta_n(\mathbf{X}_j)^2 ] } \\
	&\stackrel{id}{=} \frac{\sigma^2}{n} + \frac{ \mathbb{E}[ \varDelta_n(\mathbf{X}_1)^2] }{n} + (1 - \frac{1}{n})( \mathbb{E}[ \varDelta_n(\mathbf{X}_1)^2  ] + 2 \sigma  \sqrt{\mathbb{E}[ \varDelta_n(\mathbf{X}_1)^2 ]} ) \\
	& \longrightarrow 0
	\end{align*}
	by Corollary~\ref{ConsistencyOOB} as $n\to\infty$ which completes the proof.

\end{proof}

\begin{proof}[Proof of Theorem \ref{ConsistencyBoot}]
 To be mathematically precise, let $\Theta,\dots, \Theta_M$ and $(Y_i,{\bf X}_i)$ be defined on some probability space $(\Omega,\mathcal A, \mathbb{P})$ and let the parametric bootstrap variables $\epsilon_{i,b}^*$ be defined on another  probability space $(\Omega^*,\mathcal A^*, \mathbb{P}^*)$. Then, all random variables can be defined (via projections) on the joined product space $(\Omega\times \Omega^*,\mathcal A \otimes \mathcal A^*, \mathbb{P} \otimes \mathbb{P}^*)$; explaining the assumption that the random variables $\epsilon_{i,b}^*$ are independent from $\mathcal{D}_n$ and i.i.d. generated from a distribution with finite second moment with $Var^*(\epsilon_{1,1}^*) = \hat{\sigma}_{RF}^2$ and $\mathbb{E}^*[\epsilon_{1,1}^*] = 0$. 
 
 Within this framework consider $Y_{i,b}^* = m_n^{OOB}(\mathbf{X}_i) + \epsilon_{i,b}^*$ and denote with $\mathcal{D}_{n, b}^* := \{ (\mathbf{X}_i, Y_{i,b}^*) : i = 1, \dots, n \}$ the set of the $b$-th bootstrapped sample for $b \in \{1, \dots, B\}$. Then the sequence of sets $\{\mathcal{D}_{n,b}^*\}_{b = 1}^B$ is independent. In particular, conditioned on $\mathcal{D}_n$, $\{ Y_{i,b}^* \}_{b}$ forms a sequence of i.i.d. random variables. 
 
 Now, note that random forest models are the weighted sum of the response variable. Hence, denoting with $A_n({\mathbf{x}; \Theta} )\subset [0,1]^p$ the hyper-rectangle obtained after constructing one random decision tree with seed parameter $\Theta$ containing $\mathbf{x}$, then the infinite random forest model can be rewritten as  
 \begin{align}
 	m_n^{OOB}(\mathbf{X}_i) &= \sum\limits_{j = 1}^n W_{n, j}^{OOB}(\mathbf{X}_i) Y_{j},
 \end{align}
  see, e.g., the proof of Theorem 2 in \cite{scornet2015consistency} for a similar observation. Here, $\sum\limits_{j = 1}^n W_{n,j}^{OOB}(\mathbf{X}_i) = 1$ holds almost surely and the weights $W_{n, j}^{OOB}$ are defined as 
  $$
  W_{n,j}^{OOB}(\mathbf{X}_i) = \mathbb{E}_\Theta \left[ \frac{\mathds{1}\{ \mathbf{X}_j \in A_n(\mathbf{X}_i; \Theta) \}  }{N_n( A_n(\mathbf{X}_i; \Theta ) )} \right],
  $$
  where $N_n(A_{n}(\mathbf{X}_i; \Theta))$ is the number of data points falling in $A_n(\mathbf{X}_i; \Theta)$. Further let $\mathbf{X}_j \stackrel{\Theta }{\leftrightarrow} \mathbf{X}_i$ be the event that both points, $\mathbf{X}_i$ and $\mathbf{X}_j$, fall in the same cell under the tree constructed by $\Theta$.
  Due to sampling without replacement, there are ${n-1}\choose{a_n-1} $ choices to pick a fixed observation $\mathbf{X}_i$. Therefore, we obtain 
  \begin{align}\label{UpperBoundWeight}
  	W_{nj}^{OOB}(\mathbf{X}_i) \le\max\limits_{1 \le i  \le n} \mathbb{P}_\Theta ( \mathbf{X}_j \stackrel{\Theta}{\leftrightarrow} \mathbf{X}_i ) \le \frac{ { n-2\choose a_{n-1} -1 }  }{ { n-1 \choose a_{n-1}   }  } \le \frac{a_{n-1}}{n-1}
  \end{align}

Setting $A_{n,i} =  \sum\limits_{j = 1}^n W_{nj}^{OOB}(\mathbf{X}_i) \cdot (m_n^{OOB}(\mathbf{X}_j)  - m(\mathbf{X}_j))$ and $B_{n,i} = \sum\limits_{j = 1}^n W_{nj}^{OOB}(\mathbf{X}_i) \cdot (\epsilon_j^* - \epsilon_j)$, we obtain the following result for every fixed $b \in \{1, \dots, B\}$ using the Cauchy-Schwarz inequality:
\begin{align}\label{BootstrapIneq}
\mathbb{E}[(m_{n,b}^{OOB}(\mathbf{X}_i) - m_n^{OOB}(\mathbf{X}_i) )^2]	&= \mathbb{E} \left[ \left(\sum\limits_{j = 1}^n W_{nj}^{OOB}(\mathbf{X}_i) Y_j^*  - m_n^{OOB}(\mathbf{X}_i)  \right)^2 \right] \notag \\ \notag
 &= \mathbb{E}\left[\left( \sum\limits_{j = 1}^n W_{nj}^{OOB}(\mathbf{X}_i) \cdot (Y_j^* - Y_j)  \right)^2  \right]\\ \notag
 &= \mathbb{E}[ (A_{n,i} + B_{n,i})^2 ] \\
 &\le \mathbb{E}[A_{n,i}^2] + 2 \mathbb{E}[A_{n,i}^2]\mathbb{E}[B_{n,i}^2] + \mathbb{E}[B_{n,i}^2]
\end{align} 
In order to prove $L_1$-consistency of the bootstrapped corrected estimate, based on (\ref{BootstrapIneq}), we only need to show that $\mathbb{E}[A_{n,i}^2] \rightarrow 0$ and $\mathbb{E}[ B_{n,i}^2 ] \rightarrow 0$ as $n \rightarrow \infty$. Now, note that $\mathbb{E}[ ( \epsilon_j^* - \epsilon_j )^2 | \mathcal{D}_n ] = \hat{\sigma}_{RF}^2 - 2 \epsilon_j \mathbb{E}[ \epsilon_j^* | \mathcal{D}_n ] + \epsilon_j^2 = \hat{\sigma}_{RF}^2 + \epsilon_j^2 $ almost surely. Conditioning on $\mathcal{D}_n$, we know that $(\epsilon_j^* - \epsilon_j)$ and $(\epsilon_\ell^* - \epsilon_\ell)$ are independent for $j \neq \ell$ such that $\mathbb{E}[ (\epsilon_j^* - \epsilon_j)(\epsilon_\ell^* - \epsilon_\ell) | \mathcal{D}_n ]  = \mathbb{E}[ (\epsilon_j^* - \epsilon_j) | \mathcal{D}_n ] \mathbb{E}[ (\epsilon_\ell^* - \epsilon_\ell)| \mathcal{D}_n ]   = \epsilon_j \epsilon_\ell$ almost surely. Combining these two results, we obtain with (\ref{UpperBoundWeight}): 
\begin{align}
	\mathbb{E}[ B_{n,i}^2 ] &= \sum\limits_{j = 1}^n\mathbb{E}[ W_{nj}^{OOB}(\mathbf{X}_i)^2 (\epsilon_j^* - \epsilon_j)^2] + \notag \\
	& + \sum\limits_{j \neq \ell} \mathbb{E}[W_{nj}^{OOB}(\mathbf{X}_i) W_{n\ell}^{OOB}(\mathbf{X}_i)(\epsilon_j^* - \epsilon_j)(\epsilon_\ell^* - \epsilon_\ell)  ] \notag \\
	&= \sum\limits_{j = 1}^n\mathbb{E}[ W_{nj}^{OOB}(\mathbf{X}_i)^2  \mathbb{E} [(\epsilon_j^* - \epsilon_j)^2 | \mathcal{D}_n] ]  + \notag \\
	& + \sum\limits_{j \neq \ell} \mathbb{E}[W_{nj}^{OOB}(\mathbf{X}_i) W_{n\ell}^{OOB}(\mathbf{X}_i) \mathbb{E}[ (\epsilon_j^* - \epsilon_j)(\epsilon_\ell^* - \epsilon_\ell) | \mathcal{D}_n ]  ] \\ &= \sum\limits_{ j = 1}^n \mathbb{E}[ W_{n,j}^{OOB}(\mathbf{X}_i)^2( \hat{\sigma}_{RF}^2 + \epsilon_j^2 ) ] + \notag \\
	&+ \sum\limits_{j \neq \ell} \mathbb{E}[ W_{nj}^{OOB}(\mathbf{X}_i) W_{n\ell}^{OOB}(\mathbf{X}_i) \epsilon_j \epsilon_\ell ]  \notag  \\
	&\le \frac{a_{n-1}^2}{n-1} \frac{n}{n-1} (\mathbb{E}[\hat{\sigma}_{RF}^2]+ \sigma^2) +  \frac{a_{n-1}^2}{(n-1)^2} \sum\limits_{j \neq \ell} \mathbb{E}[\epsilon_j \epsilon_\ell] \notag \\
	&= \frac{a_{n-1}^2}{n-1} \frac{n}{n-1} (\mathbb{E}[\hat{\sigma}_{RF}^2]+ \sigma^2)  \longrightarrow 0, \quad n \rightarrow \infty,  \label{BBoot}
\end{align}
where the inequality results by applying (\ref{UpperBoundWeight}) on the weights and the last equality from the fact that $\mathbb{E}[ \epsilon_j \epsilon_\ell ] = \mathbb{E}[\epsilon_j] \mathbb{E}[\epsilon_\ell] = 0$, the convergence from Theorem \ref{VarConsis} and $a_n^2/n \rightarrow 0$.
Furthermore, by applying Jensen's inequality, we obtain 
\begin{align}
	\mathbb{E}[A_{n,i}^2] &\le \mathbb{E}[ \sum\limits_{j = 1}^n W_{nj}^{OOB}(\mathbf{X}_i) ( m_n^{OOB}(\mathbf{X}_j) - m(\mathbf{X}_j) )^2 ] \notag \\
	&= \mathbb{E}[ \sum\limits_{ \substack{j=1 \\ j\neq i}}^n W_{nj}^{OOB}(\mathbf{X}_i) ( m_n^{OOB}(\mathbf{X}_j) - m( \mathbf{X}_j) )^2 ]  + \notag \\
	&+ \mathbb{E}[  \mathbb{E}_\Theta[ N_n(A_n(\mathbf{X}_i))^{-1} ]  (m_n^{OOB}(\mathbf{X}_i) - m(\mathbf{X}_i))^2 ]  \notag \\
	& \le \mathbb{E}[ ( m_n^{OOB}(\mathbf{X}_1)  - m(\mathbf{X}_1))^2  \sum\limits_{ \substack{j=1 \\ j\neq i} } W_{nj}^{OOB}(\mathbf{X}_i) ] + \mathbb{E}[ (m_n^{OOB}(\mathbf{X}_1) - m(\mathbf{X}_1)^2 ] \notag \\
	&\le 2 \mathbb{E}[ (m_n^{OOB}(\mathbf{X}_1) - m(\mathbf{X}_1))^2] \longrightarrow 0, \quad n \rightarrow \infty, \label{ABoot}
\end{align}

where the last two inequalities arise by using the identical distribution of the sequence $\{ (m_n^{OOB}(\mathbf{X}_i) - m(\mathbf{X}_i))^2 \}_{i = 1}^n$ and the fact that the random forest weights sum up to one.  Finally, we have
\begin{align}
	\mathbb{E}[ \hat{R}_B(m_n) ] &= \frac{1}{n} \sum\limits_{i = 1}^n \mathbb{E}[ \frac{1}{B} \sum\limits_{b = 1}^B \mathbb{E}[ (m_{n,b}^{OOB}(\mathbf{X}_i) - m_n^{OOB}(\mathbf{X}_i))^2 | \mathcal{D}_n ]   ] \notag \\
	&=\frac{1}{n} \sum\limits_{i = 1}^n \mathbb{E}[ \mathbb{E}[ ( m_{n,1}^{OOB}(\mathbf{X}_i ) - m_n(\mathbf{X}_i) )^2 | \mathcal{D}_n ] ] \notag \\
	&= \mathbb{E}[ ( m_{n,1}^{OOB}(\mathbf{X}_i ) - m_n(\mathbf{X}_i) )^2 ]  \longrightarrow 0, \label{FinalResBoot}
 \end{align}
 where the last two equalities follow from the identical distribution of $\{ m_{n,b}(\mathbf{X}_i) \}_{b = 1}^B$ with respect to the bootstrap measure $\mathbb{P}^*$ and the identical distribution of $\{  \mathbb{E}[ ( m_{n,1}^{OOB}(\mathbf{X}_i ) - m_n(\mathbf{X}_i) )^2 | \mathcal{D}_n ]\}_{ i = 1}^n$.
The convergence in (\ref{FinalResBoot}) follows by plugging in (\ref{BBoot}) and (\ref{ABoot}) into (\ref{BootstrapIneq}).

\end{proof}

\begin{proof}[Proof of Theorem \ref{BootTheorem}]
   Sticking to the same notation as in Theorem \ref{ConsistencyBoot}, we obtain the following lower bound for the random forest weights: 
  \begin{align}\label{WeightInequality}
  	W_{n,j}(\mathbf{X}_i)  \ge \mathbb{E}_\Theta[ \mathds{1} \{ \mathbf{X}_j \stackrel{\Theta}{\leftrightarrow} \mathbf{X}_i \}(a_n)^{-1} ] = \frac{\mathbb{P}_\Theta ( \mathbf{X}_j \stackrel{\Theta}{\leftrightarrow} \mathbf{X}_i )}{a_n}.
  \end{align}
  
  Since the prescribed parametric bootstrap approach makes use of the same tree structure as $m_n^{OOB}$, the bootstrapped prediction $\hat{Y}_{i,b}^* = m_{n,b}^{OOB}(\mathbf{X}_i)$ can also be rewritten as
\begin{align}
	m_{n,b}^{OOB}(\mathbf{X}_i) = \sum\limits_{j = 1}^n W_{n,j}^{OOB}(\mathbf{X}_i) \cdot  Y_{j,b}^\star \quad b = 1, \dots, B
\end{align}
 As these quantities are i.i.d. in the index $b$ for fixed $i$, we can apply the strong law of large numbers conditioned on fixed $\mathcal{D}_n$ to obtain
 \begin{align}\label{StrongLawBoot}
 	R_B(m_n) \longrightarrow \frac{1}{n} \sum\limits_{i = 1}^n \mathbb{E}^*\left[  (m_{n,1}(\mathbf{X}_i) -  m_n(\mathbf{X}_i))^2  \right] 
 \end{align}
 almost surely as $B \rightarrow \infty$ given $\mathcal{D}_n$. Moreover, we have
  \begin{align}\label{FirstMoment}
  	\mathbb{E}^*[m_{n,1}^{OOB}(\mathbf{X}_i)] &= \sum\limits_{j = 1}^n W_{n,j}^{OOB}(\mathbf{X}_i) \cdot \mathbb{E}^*[Y_{j,1}^*] \notag \\
  	&= \sum\limits_{j = 1}^n W_{n,j}^{OOB}(\mathbf{X}_i) \cdot m_n^{OOB}(\mathbf{X}_j).
  \end{align}
 Furthermore, due to the independence of the sequence $\{ Y_j^* \}_j$ with respect to the boostrap measure $\mathbb{P}^*$, we obtain 
  \begin{align} \label{SecondMoment}
  	\mathbb{E}^*[ m_{n,1}^{OOB}(\mathbf{X}_i)^2 ] &=  \sum\limits_{j = 1}^n( W_{n,j}^{OOB}(\mathbf{X}_i))^2 \cdot \mathbb{E}^*[(Y_j^*)^2] +  \\
  	&+ \sum\limits_{j \neq k} W_{n,j}^{OOB}(\mathbf{X}_i) W_{n,k}^{OOB}(\mathbf{X}_i) \mathbb{E}^*[Y_j^*] \mathbb{E}^*[Y_k^*] \notag\\
  	&= \sum\limits_{j = 1}^n (W_{n,j}^{OOB}(\mathbf{X}_i) )^2 ( \hat{\sigma}_{RF}^2 + m_n^{OOB}(\mathbf{X}_j) ^2) + \notag\\
  	&+ \sum\limits_{j \neq k} W_{n,j}^{OOB}(\mathbf{X}_i) \cdot W_{n,k}^{OOB}(\mathbf{X}_i) \cdot m_n^{OOB}(\mathbf{X}_j) \cdot m_n^{OOB}(\mathbf{X}_k) \notag\\
  	&= \hat{\sigma}_{RF}^2 \sum\limits_{j = 1}^n (W_{n,j}^{OOB}(\mathbf{X}_i))^2 + \left(  \sum\limits_{j = 1}^n W_{n,j}^{OOB}(\mathbf{X}_i) \cdot m_n^{OOB}(\mathbf{X}_j) \right)^2. \notag
  \end{align}
  Combining the results from (\ref{WeightInequality}), (\ref{FirstMoment}) and (\ref{SecondMoment}), we obtain: 
  \begin{align*}
  	\mathbb{E}^*[ ( m_{n,1}^{OOB}(\mathbf{X}_i) - m_n(\mathbf{X}_i) )^2 ] &= \hat{\sigma}_{RF}^2 \sum\limits_{j = 1}^n (W_{n,j}^{OOB}(\mathbf{X}_i))^2 +  \\
  	&+ \left( \sum\limits_{j = 1}^n W_{n,j}^{OOB}(\mathbf{X}_i)\cdot  m_n^{OOB}(\mathbf{X}_i) \right)^2 \\ 
  	&- 2 \left(  \sum\limits_{j = 1}^n W_{n,j}^{OOB}(\mathbf{X}_i) \cdot m_n^{OOB}(\mathbf{X}_j) \cdot m_n^{OOB}(\mathbf{X}_i)  \right) \\
  	&+ m_n^{OOB}(\mathbf{X}_i)^2 \\
  	&= \left( \sum\limits_{j = 1}^n W_{n,j}^{OOB}(\mathbf{X}_i) \cdot ( m_n^{OOB}(\mathbf{X}_j) - m_n^{OOB}(\mathbf{X}_i) ) \right)^2 \\
  	&+ \hat{\sigma}_{RF}^2 \sum\limits_{j = 1}^n (W_{n,j}^{OOB}(\mathbf{X}_i) )^2 \\
  	&\ge \frac{ \hat{\sigma}_{RF}^2}{a_n^2}  \sum\limits_{j = 1}^n \mathbb{P}_\Theta^2( \mathbf{X}_j \stackrel{\Theta}{\leftrightarrow}  \mathbf{X}_i  ) \ge \frac{ \hat{\sigma}_{RF}^2}{a_n^2}, 
  \end{align*}
  where we inserted $\sum_{\ell=1}^n W_{n,\ell}^{(OOB)}({\bf X}_i) =1$ (almost surely) in the third and second last step and utilized that 
  $\mathbb{P}_\Theta^2( \mathbf{X}_i \stackrel{\Theta}{\leftrightarrow}  \mathbf{X}_i) =1$ in the last inequality. Finally, the result follows from (\ref{StrongLawBoot}).
 
\end{proof}

\section*{References}

\bibliography{BibFile}

\end{document}